\documentclass[reqno]{amsart}
\usepackage{hyperref}
\usepackage{amssymb, enumerate}

\begin{document}
\title[\hfil Pohozaev type ineq.]
{Pohozaev-type inequalities and nonexistence results for non $C^2$ solutions of $p(x)$-laplacian equations.}

\author[Gabriel L\'opez G ]
{Gabriel L\'opez}

\address{
Gabriel L\'opez G. \newline
Universidad Aut\'onoma Metropolitana , M\'exico D.F, M\'exico}
\email{gabl@xanum.uam.mx}
\

\subjclass[2000]{35D05, 35J60, 58E05}
\keywords{Pohozaev-type inequality, $p(x)$-Laplace
operator, variable exponent Sobolev spaces.}

\begin{abstract}
In this paper a Pohozaev type inequality is stated for variable exponent Sobolev spaces in order to prove
non existence of nontrivial weak solutions for a Dirichlet problem with non-standard growth. The obtained results generalize a previous work of M. \^{O}tani.
\end{abstract}

\maketitle
\numberwithin{equation}{section}
\newtheorem{defi}{Definition}[section]
\newtheorem{theorem}[defi]{Theorem}
\newtheorem{lemma}[defi]{Lemma}
\newtheorem{coro}[defi]{Corollary}
\newtheorem{rem}{Remark}
\newtheorem{prop}[defi]{Proposition}

\section{Introduction}
Let $\Omega$ be a bounded domain in $\mathbb{R}^N$ with smooth boundary $\partial \Omega.$ The domain $\Omega$ is said  to be {\it star shaped} (respectively {\it strictly star shaped}) if $(x\cdot\nu(x))\geqslant 0$  (respectively if $(x\cdot \nu(x))\geqslant \rho>0$) holds for all $x\in\partial \Omega$ with a suitable choice of the origin, where
$\nu(x)=(\nu_1(x),\dots,\nu_N(x))$ denotes the outward normal unit vector at $x\in\partial \Omega.$
Consider the problem
\begin{eqnarray}
\label{DI}
\begin{cases}
-\Delta_{p(x)} u = f(u) ,\qquad x\in \Omega\\
u(x)=0,\qquad x\in\partial\Omega.
\end{cases}
\end{eqnarray}
In \cite{di} in order to obtain some non existence results for Problem (\ref{DI}) with $\Omega$ star shaped  some Pohozaev type identities are stated and applied to the case in which $f$ does not depend of $p(x)$ and $u\in C^2(\Omega).$ Nevertheless,
it is known \cite{ho} that for $f(u)=|u|^{q-2}u,$ $1<q<\infty,$  $2<p<\infty,$ and $p,q$ constants, non\-trivial solutions of (\ref{DI}) does not belong to $C^2(\Omega)\cap C(\overline{\Omega}).$ The arguments in \cite[Proposition 1.1]{ho} are easily extended to the variable exponent case, so that in general, results in \cite{di} can not be applied when $\nabla u(x)=0,$ not even for solutions in $W^{2,p(x)}(\Omega)\cap W^{1,p(x)}(\overline{\Omega}).$ In this way,  solutions of the problem,
\begin{eqnarray}
\label{E}
(E)
\begin{cases}
-\Delta_{p(x)} u = |u|^{q(x)-2}u ,\qquad x\in \Omega\\
u(x)=0,\qquad x\in\partial\Omega,
\end{cases}
\end{eqnarray}
where $\Delta_{p(x)}u=\text{div}(|\nabla u|^{p(x)-2}\nabla u),$ in general do not belong to $C^2(\Omega).$

 Existence of solutions for problem $(E)$ is studied in \cite{fzh} and \cite{moss}. The authors in \cite{moss}   prove existence  for the case in which the embbeding from $W_0^{1,p(\cdot)}(\Omega)$ to $L^{q(\cdot)}(\Omega)$ is compact and moreover,  they prove existence even for the case in which the embbeding from $W_0^{1,p(\cdot)}(\Omega)$ to $L^{q(\cdot)}(\Omega)$ is not compact provided that certain functional inequality holds true.

This paper is organized as follows. In section \ref{VES} some necessary background in Variable Exponent Sobolev Spaces is provided including some required Compact Embedding results.    In section \ref{ptis}, Theorem \ref{l4.2} we state and prove a Pohozaev-type inequality. In Section \ref{NES}, as a consequence of the Pohozaev type inequality, we prove some nonexistence results of nontrivial weak solutions of problem (\ref{E}).

\section{Variable exponent setting}
\label{VES}
We recall some definitions and  basic properties of the variable exponent
Lebesgue-Sobolev spaces $L^{p(\cdot)}(\Omega)$ and
$W_0^{1,p(\cdot)}(\Omega)$, where $\Omega$ is a bounded domain in
$\mathbb{R}^N$.

 For any
$p\in \mathcal{C}(\overline\Omega)$ we define
$$
p^+=\sup_{x\in\Omega}p(x)\quad\mbox{and}\quad
p^-=\inf_{x\in\Omega}p(x).
$$
 The variable
exponent Lebesgue space for
 measurable real-valued  functions is defined as the set
\begin{align*}
L^{p(\cdot)}(\Omega)=\left\{u: \int_\Omega \left|u(x) \right|^{p(x)}\,dx<\infty\right\},
\end{align*}
endowed with the  {\it Luxemburg norm}
$$
\|u\|_{p(\cdot)}=\inf\left\{\mu>0;\;\int_\Omega
\left|\frac{u(x)}{\mu}\right|^{p(x)}\,dx\leq 1\right\},
$$
which is a separable and reflexive Banach space if $1<p^-\leqslant p^+<\infty.$ For basic
properties of the variable exponent Lebesgue spaces we refer to
 \cite{dhhr}, \cite{ko}.


Let $L^{p'(\cdot)}(\Omega)$ be the conjugate space of
$L^{p(\cdot)}(\Omega)$, obtained by conjugating the exponent
pointwise that is,  $1/p(x)+1/p'(x)=1$, \cite[Corollary~2.7]{ko}.
For any $u\in L^{p(\cdot)}(\Omega)$ and $v\in L^{p'(\cdot)}(\Omega)$
the following H\"older type inequality is valid

\begin{equation}\label{Hol}
\left|\int_\Omega uv\,dx\right|\leq\left(\frac{1}{p^-}+
\frac{1}{{p'}^-}\right)\|u\|_{p(\cdot)}\|v\|_{p'(\cdot)}.
\end{equation}

An important role in manipulating the generalized Lebesgue-Sobolev
spaces is played by the {\it $p(\cdot)$-modular} of the
$L^{p(\cdot)}(\Omega)$ space, which is the mapping
 $\rho_{p(\cdot)}:L^{p(\cdot)}(\Omega)\to\mathbb{R}$ defined by
$$
\rho_{p(\cdot)}(u)=\int_\Omega|u|^{p(x)}\,dx.
$$
If $(u_n)$, $u\in L^{p(\cdot)}(\Omega)$ then the following relations
hold
\begin{gather}\label{L40}
\|u\|_{p(\cdot)}<1\;(=1;\,>1)\;\Leftrightarrow\;\rho_{p(\cdot)}(u)
<1\;(=1;\,>1)
\\ \label{L4}
\|u\|_{p(\cdot)}>1 \;\Rightarrow\;
\|u\|_{p(\cdot)}^{p^-}\leq\rho_{p(\cdot)}(u) \leq \|u\|_{p(\cdot)}^{p^+}
\\ \label{L5}
\|u\|_{p(\cdot)}<1 \;\Rightarrow\; \|u\|_{p(\cdot)}^{p^+}\leq
\rho_{p(\cdot)}(u)\leq \|u\|_{p(\cdot)}^{p^-}
\\ \label{L6}
\|u_n-u\|_{p(\cdot)}\to 0\;\Leftrightarrow\;\rho_{p(\cdot)} (u_n-u)\to
0,
\end{gather}
since $p^+<\infty$. For a proof of these facts see \cite{ko}. 

The set $W_0^{1,p(x)}(\Omega)$ is defined as the closure of
$C_0^{\infty}(\Omega)$ under the norm
\[
\| u\|_{p(x)}=\|\nabla u\|_{p(x)}.
\]
The space $(W_0^{1,p(x)}(\Omega),\| \cdot \|_{p(x)})$ is a separable
and reflexive Banach space if $1<p^-\leqslant p^+<\infty.$ We note that if $q\in
C_+(\overline{\Omega})$ and $q(x)<p^*(x)$ for all $x\in
\overline{\Omega}$ then the embedding
$W_0^{1,p(x)}(\Omega)\hookrightarrow L^{q(x)}(\Omega)$ is  continuous, where $p^*(x)=Np(x)/(N-p(x))$ if $p(x)<N$ or
$p^*(x)=+\infty$ if $p(x)\geq N$ \cite[Theorem 3.9 and 3.3]{ko} (see
also \cite [Theorem 1.3 and 1.1]{fa}).

The bounded variable exponent $p$ is said to be Log-H\"older continuous
if there is a constant $C>0$ such that

\begin{equation}\label{logh}
  |p(x)-p(y)|\leqslant \frac{C}{-\log (|x-y|)}
\end{equation}
for all  $x,y \in \mathbb{R}^{N},$ such that $|x-y|\le\frac{1}{2}.$
A bounded exponent $p$ is Log-H\"older continuous in $\Omega$ if and
only if there exists a con\-stant $C>0$ such that

\begin{equation}\label{logh1}
|B|^{p^{-}_{B}-p^{+}_{B}}\le C
\end{equation}
for every ball $B\subset\Omega$ \cite[ Lemma 4.1.6, page 101]{dhhr}.
Under the Log-H\"older condition smooth functions are dense in varia\-ble   exponent Sobolev space \cite[Proposition 11.2.3, page 346]{dhhr}.

Finally, Compact Embedding results, as many other facts, are a very delicate and interesting issue in variable exponent spaces. For instance in \cite[prop 3.1]{moss} is shown that for certain exponents with $p^*(x)> q(x)>p^*(x)-\epsilon $ (in our notation) with $x$ in some subset of $\Omega$ the embedding from $W_0^{1,p(\cdot)}(\Omega)$ to $L^{q(\cdot)}(\Omega)$ is not compact. On the other hand, surprisingly, if $q(x)=p^*(x)$ at some point, it is known that the embedding is compact in $\mathbb{R}^N$ see \cite[Thm. 8.4.6]{dhhr} and references therein. In this paper we will use Proposition 3.3 of \cite{moss} which in our notation can be stated as the following Proposition.

\begin{prop}
\label{rkt}
  Let $p(\cdot)$ satisfying the log-H\"older condition on the open and bounded set $\Omega\subset \mathbb{R}^N.$ Suppose that $\partial\Omega\in C^1$ or $\Omega$ satisfies the cone condition, and  $p^+<N.$ Let $q(\cdot)$ be a variable exponent on $\Omega$ such that $1\leqslant q^-$ and
 \begin{equation}
 \label{rkc}
 \text{{\rm ess}}\inf_{x\in \Omega}\left(p^*(x)-q(x)\right)>0.
 \end{equation}
 Then $W_0^{1,p(\cdot)}(\Omega)\hookrightarrow\hookrightarrow L^{q(\cdot)}(\Omega),$ i. e. $W_0^{1,p(\cdot)}(\Omega)$ is compactly embedded in $L^{q(\cdot)}(\Omega).$
\end{prop}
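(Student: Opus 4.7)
The plan is to take a bounded sequence $(u_n)\subset W_0^{1,p(\cdot)}(\Omega)$ and extract a subsequence converging in $L^{q(\cdot)}(\Omega)$. Under the Log-H\"older condition together with $p^+<N$, the continuous variable-exponent Sobolev embedding $W_0^{1,p(\cdot)}(\Omega)\hookrightarrow L^{p^*(\cdot)}(\Omega)$ produces a uniform bound on $(u_n)$ in $L^{p^*(\cdot)}(\Omega)$. Since $\Omega$ is bounded and $p^-\leq p(x)$, the same sequence is bounded in the classical space $W_0^{1,p^-}(\Omega)$. The classical Rellich--Kondrachov theorem then yields a subsequence (still denoted $(u_n)$) converging in $L^1(\Omega)$ to some $u$, and, passing to a further subsequence, a.e.\ in $\Omega$. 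Fatou's lemma combined with the uniform $L^{p^*(\cdot)}$-bound guarantees that $u\in L^{p^*(\cdot)}(\Omega)$ and that $\rho_{p^*(\cdot)}(u_n-u)\leq C$ uniformly in $n$.

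The heart of the argument is a truncation step exploiting the essential gap $\varepsilon:=\operatorname{ess\,inf}_{x\in\Omega}(p^*(x)-q(x))>0$. For any $K\geq 1$, decompose
\begin{equation*}
\int_\Omega |u_n-u|^{q(x)}\,dx = \int_{\{|u_n-u|\leq K\}} |u_n-u|^{q(x)}\,dx + \int_{\{|u_n-u|> K\}} |u_n-u|^{q(x)}\,dx.
\end{equation*}
On $\{|u_n-u|\leq K\}$ the integrand is dominated by $K^{q^+}$ (and $q^+$ is finite since $q(x)\leq p^*(x)$ and $p^+<N$), so dominated convergence drives this piece to $0$ as $n\to\infty$ for each fixed $K$. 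On $\{|u_n-u|>K\}$, the inequalities $q(x)-p^*(x)\leq -\varepsilon$ together with $|u_n-u|>K\geq 1$ yield $|u_n-u|^{q(x)}\leq K^{-\varepsilon}|u_n-u|^{p^*(x)}$, so this piece is at most $K^{-\varepsilon}C$ uniformly in $n$. Hence $\limsup_n\int_\Omega|u_n-u|^{q(x)}\,dx\leq K^{-\varepsilon}C$ for every $K\geq 1$; letting $K\to\infty$ forces modular convergence, which by the equivalence (\ref{L6}) is precisely norm convergence in $L^{q(\cdot)}(\Omega)$.

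The main obstacle is the variable character of the target exponent, which rules out a one-line reduction either to the classical Rellich--Kondrachov theorem or to a plain H\"older inclusion: a direct embedding $L^{q^+}\hookrightarrow L^{q(\cdot)}$ would require $q^+<(p^-)^*$, an assumption strictly stronger than (\ref{rkc}), since $\sup_x p^*(x)$ can exceed $(p^-)^*$. The truncation argument above bypasses this by separating a low-level region, where a.e.\ convergence and a trivial uniform bound trigger Lebesgue's theorem, from a high-level region, where the positive gap $\varepsilon$ supplies a $K^{-\varepsilon}$ decay that closes the estimate.
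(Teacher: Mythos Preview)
Your argument is correct. The paper itself does \emph{not} prove this proposition; it merely records it as a restatement of Proposition~3.3 in \cite{moss} and uses it as a black box. So your attempt is not to be compared with any proof in the present paper, but rather supplies one where the paper only cites.

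A couple of remarks on the substance. Your proof rests on the \emph{critical} continuous embedding $W_0^{1,p(\cdot)}(\Omega)\hookrightarrow L^{p^*(\cdot)}(\Omega)$ under the log-H\"older condition; this is a genuine theorem (see e.g.\ \cite[Chapter~8]{dhhr}) and is strictly stronger than the subcritical embedding stated just before the proposition in the paper, so be explicit about invoking it. Once that bound is in hand, your truncation at level $K$ is clean: the low part is handled by dominated convergence on the bounded domain with the constant majorant $K^{q^+}$ (and $q^+\le (p^+)^*-\varepsilon<\infty$ is indeed guaranteed by $p^+<N$ together with \eqref{rkc}), while on the high part the pointwise inequality $|u_n-u|^{q(x)}\le K^{-\varepsilon}|u_n-u|^{p^*(x)}$ is exactly the place where the strict gap \eqref{rkc} is used. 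The passage from modular convergence to norm convergence via \eqref{L6} is legitimate since $q^+<\infty$.

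Compared with the original source \cite{moss}, your route is more elementary and self-contained: it avoids any covering or localisation machinery by trading them for the critical Sobolev inequality plus a single splitting estimate. The price is the appeal to that critical inequality, which is itself nontrivial; but given the references already available in the paper this is a fair exchange.
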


In the next section we will require also the following Lemma.

\begin{lemma}\label{OTNOT}
Let $1<p(x)<q^-<q(x)<q^+<\infty$ $a.e.$ in $\Omega.$ Assume that $\|u_n\|_r<C$ for $1\leqslant r<\infty$ and $u_n\to u$ as $n\to \infty$ in $L^{p(\cdot)}(\Omega).$ Then $u_n\to u$ as $n\to \infty$ in $L^{q(\cdot)}(\Omega).$
\end{lemma}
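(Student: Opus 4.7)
The plan is to prove the claim by verifying that the modular converges, $\rho_{q(\cdot)}(u_n-u)=\int_\Omega|u_n-u|^{q(x)}\,dx\to 0$, which is equivalent to $L^{q(\cdot)}$-convergence by (\ref{L6}). I would split $\Omega=A_n\cup B_n$ with
$$A_n=\{x\in\Omega:|u_n(x)-u(x)|\leq 1\},\qquad B_n=\Omega\setminus A_n,$$
and treat each region separately, exploiting the monotonicity of $t\mapsto t^{s}$ with respect to $s$.

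On $A_n$, since $p(x)\leq q(x)$ a.e.\ and $|u_n-u|\leq 1$, one has the pointwise bound $|u_n-u|^{q(x)}\leq|u_n-u|^{p(x)}$, so
$$\int_{A_n}|u_n-u|^{q(x)}\,dx\leq \rho_{p(\cdot)}(u_n-u)\longrightarrow 0$$
by the hypothesis $u_n\to u$ in $L^{p(\cdot)}(\Omega)$ and (\ref{L6}). On $B_n$, where $|u_n-u|>1$, I would instead use $q(x)\leq q^+$; reading the hypothesis as providing a uniform bound $\|u_n\|_r\leq C$ for some $r>q^+$, and noting that a subsequence converges a.e.\ (from convergence in $L^{p(\cdot)}$) and Fatou's lemma yield $u\in L^r$, classical H\"older interpolation on $B_n$ gives
$$\int_{B_n}|u_n-u|^{q(x)}\,dx\leq \int_{B_n}|u_n-u|^{q^+}\,dx\leq |B_n|^{1-q^+/r}\,\|u_n-u\|_r^{q^+}.$$

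To close the argument, I would observe that $|B_n|\to 0$: on $B_n$ we have $|u_n-u|^{p(x)}\geq 1$, hence $|B_n|\leq \rho_{p(\cdot)}(u_n-u)\to 0$. Combined with the uniform bound $\|u_n-u\|_r\leq 2C$, this forces the $B_n$-contribution to vanish because the exponent $1-q^+/r$ is strictly positive. Adding both estimates gives $\rho_{q(\cdot)}(u_n-u)\to 0$, whence $u_n\to u$ in $L^{q(\cdot)}(\Omega)$. The main delicate point is simply securing $r>q^+$ so that the H\"older exponent on $|B_n|$ is positive; the rest is a standard interpolation argument adapted to the modular of a variable exponent space.
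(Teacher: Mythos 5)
Your proof is correct, and it takes a genuinely different route from the paper's. The paper bounds the modular by the constant-exponent integral $\int_\Omega|u_n-u|^{q^-}dx$ and then applies the classical interpolation inequality $\|v\|_{q^-}\le\|v\|_{p^-}^{\theta}\|v\|_{q^+}^{1-\theta}$ (with $1/q^-=\theta/p^-+(1-\theta)/q^+$, citing Adams), using the uniform $L^{q^+}$ bound and the convergence $u_n\to u$ in $L^{p^-}(\Omega)$. You instead split $\Omega$ according to whether $|u_n-u|\le 1$ or $|u_n-u|>1$, controlling the first piece directly by the $p(\cdot)$-modular and the second by H\"older with a fixed exponent $r>q^+$ together with the observation that $|B_n|\le\rho_{p(\cdot)}(u_n-u)\to 0$. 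Your decomposition buys something real: the paper's first step, $\int_\Omega|u_n-u|^{q(x)}dx\le\int_\Omega|u_n-u|^{q^-}dx$, holds pointwise only where $|u_n-u|\le 1$ (on the complementary set the inequality reverses since $q(x)\ge q^-$), so the paper's argument as written implicitly needs exactly the kind of splitting you carry out. The modest extra cost on your side is securing a fixed $r>q^+$ with $\sup_n\|u_n\|_r<\infty$ (available from the stated hypothesis) and the Fatou/a.e.-subsequence step to place $u$ in $L^r$, whereas the paper only invokes the bound at the exponents $p^-$, $q^-$, $q^+$. Both arguments are elementary; yours is the more robust of the two.
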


\begin{proof}
Given (\ref{L40}) to (\ref{L6}) it is enough to show that $\rho_{q(\cdot)}(u_n-u)\to 0$ as $n\to\infty.$
For some $\theta\in (0,1)$ satisfying $1/q^-=\theta/p^-+(1-\theta)/q^+$ we have
\begin{multline}
\rho_{q(\cdot)}(u_n-u)=\int_\Omega |u_n-u|^{q(x)}dx\leqslant \int_\Omega |u_n-u|^{q^-}dx\\
\leqslant \left(\int_\Omega |u_n-u|^{p^-}dx\right)^{\theta q^-/p^-}\left(\int_\Omega |u_n-u|^{q^+}dx\right)^{(1-\theta) q^-/q^+}\\
\leqslant C\left(\int_\Omega |u_n-u|^{p^-}dx\right)^{\theta q^-/p^-}\to 0\text{ as }n\to\infty,
\end{multline}
given Thm. 2.11 in \cite{A}, and since $u_n\to u$ in $L^{p^-}(\Omega).$
\end{proof}



\section{Pohozaev-type inequalitiy}
\label{ptis}

In this section we state  a Pohozaev-type inequality for weak solutions $u$  belonging to the class $\mathcal{P}$ defined as
\begin{equation}
\label{P}
\mathcal{P}=\left\{u\in \left(W_0^{1,p(\cdot)}\cap L^{q(\cdot)}\right)(\Omega):x_i|u|^{q(x)-2}u\in L^{p'(\cdot)}(\Omega),\; i=1,2,\dots,N\right\}
\end{equation}
where $\;p'(x)=p(x)/(p(x)-1)$ and $p^+<N.$
To this aim, we employ the techniques introduced by  Hashimoto and $Ô$tani in \cite{ho}, \cite{h}, \cite{o1},  but within the framework of variable exponent spaces, which, as the reader may notice,  require much more careful estimations than those in the constant case.

Let $g_n(\cdot)\in C^1(\mathbb{R})$ be the cutoff functions such that $0\leqslant g'_n(s)\leqslant 1, \; s\in \mathbb{R}$ and
\begin{eqnarray}
\label{co}
g_n(s)=\begin{cases}
s\qquad |s|\leqslant n,\\
(n+1)\mbox{sign}\,s\qquad |s|\geqslant n+1.
\end{cases}
\end{eqnarray}
Let $u$ be a weak solution of (\ref{E}) 
and set $u_n=g_n(u)$ then $|u_n|^{r-2}u_n\in \left(W_0^{1,p(\cdot)}\cap L^\infty\right)(\Omega)$ for $r\in [2,\infty).$ Consider now the approximate problem

\begin{eqnarray}
\label{En}
(E)_n\quad\begin{cases}
|w_n|^{q(x)-2}w_n-\Delta_{p(x)}w_n=2|u_n|^{q(x)-2}u_n,\qquad\mbox{ in } \Omega,\\
w_n=0\qquad\mbox{ on }\partial \Omega.
\end{cases}
\end{eqnarray}
Since $u_n\in L^{\infty}(\Omega),$ there exists a sequence $\{v_n^\varepsilon\}\subset C_0^\infty(\Omega)$ satisfying
\begin{eqnarray}
\label{3o} \|v_n^\varepsilon\|_{L^\infty(\Omega)} \leqslant  C_o,\qquad \qquad\mbox{ for all } \varepsilon\in (0,1),\\
\label{4o} v_n^\varepsilon  \to  2|u_n|^{q(x)-2}u_n,\mbox{ strongly in }L^{r(\cdot)}(\Omega)\mbox{ as }\varepsilon\to 0,\mbox{ for all }r\in[1,\infty).
\end{eqnarray}
In turn, we require another approximate equation for $(E)_n$ given by
\begin{eqnarray}
\label{Ene}
(E)_n^\varepsilon\quad
\begin{cases}
|w_n^\varepsilon|^{q(x)-2}w_n^\varepsilon +A_\varepsilon w_n^\varepsilon=v_n^\varepsilon\qquad\mbox{ in }\Omega\\
w_n^\varepsilon=0\qquad\mbox{ on }\partial\Omega,
\end{cases}
\end{eqnarray}
where $A_\varepsilon u(x)=-\mbox{div}\left\{(|\nabla u(x)|^2+\varepsilon)^{(p(x)-2)/2}\nabla u(x) \right\}$ and $\varepsilon>0.$
It is possible to show that (\ref{En}) and (\ref{Ene}) have unique solutions and  that (\ref{Ene}) and (\ref{En}) provide good approximations respectively for (\ref{En}) and (\ref{E})  according to

\begin{lemma}
\label{3.1}
 Let $p(\cdot)$ satisfying the log-H\"older condition on the open and bounded set $\Omega\subset \mathbb{R}^N.$ Suppose that $\partial\Omega\in C^1$ or $\Omega$ satisfies the cone condition, and  $p^+<N.$
Then the following statements hold true:
\begin{enumerate}
\item[(i)] For each $\varepsilon\in (0,1)$ and $n\in\mathbb{N},$ there exists a unique solution $w_n^\varepsilon\in C^2(\overline{\Omega})$ of (\ref{Ene}).

\item[(ii)] For each $n\in \mathbb{N} $ there exists  a unique solution $w_n\in C^{1,\alpha}(\overline{\Omega})\cap W_0^{1,p(x)}(\Omega)$ of (\ref{En}).

\item[(iii)] $w_n^\varepsilon$ converges to $w_n$ as $\varepsilon\to 0$ in the following sense:
\begin{equation}
\label{6o}
\int_{\Omega}|\nabla w_n^\varepsilon|^{p(x)} dx\to\int_{\Omega}|\nabla w_n|^{p(x)} dx\quad\mbox{ as }\quad\varepsilon\to 0,
\end{equation}
\begin{equation}
\label{7o}
w_n^\varepsilon\to w_n\quad\mbox{ strongly in }L^r(x)(\Omega),
\end{equation}
for $r(\cdot)$ such that 
$1<r^-<r(x)<r^+$ $a.e.$ in $\Omega$
and $p^+<N.$
\item[(iv)] $w_n$ converges to $u$ as $n\to\infty$ in the following sense:
\begin{equation}
\label{8o}\int_{\Omega} |\nabla w_n|^{p(x)}dx\to\int_{\Omega} |\nabla u|^{p(x)}dx\mbox{ as }n\to \infty
\end{equation}
\begin{equation}
\label{9o}
\int_\Omega|w_n|^{q(x)}dx\to \int_\Omega |u|^{q(x)}dx,\quad\mbox{ as }\quad n\to\infty,
\end{equation}

\end{enumerate}
\end{lemma}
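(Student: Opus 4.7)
I would use the direct method of the calculus of variations. Consider the strictly convex, coercive, weakly lower semi-continuous functionals
\[
J_n^\varepsilon(w)=\int_\Omega\tfrac{1}{p(x)}(|\nabla w|^2+\varepsilon)^{p(x)/2}\,dx+\int_\Omega\tfrac{1}{q(x)}|w|^{q(x)}\,dx-\int_\Omega v_n^\varepsilon w\,dx,
\]
\[
J_n(w)=\int_\Omega\tfrac{1}{p(x)}|\nabla w|^{p(x)}\,dx+\int_\Omega\tfrac{1}{q(x)}|w|^{q(x)}\,dx-2\int_\Omega|u_n|^{q(x)-2}u_n w\,dx,
\]
on $W_0^{1,p(\cdot)}(\Omega)\cap L^{q(\cdot)}(\Omega)$. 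Coercivity follows from (\ref{L4})--(\ref{L5}) together with (\ref{Hol}) and the $L^\infty$ bounds (\ref{3o}) and $\|u_n\|_\infty\leqslant n+1$; strict convexity gives uniqueness. The Euler-Lagrange equations are precisely $(E)_n^\varepsilon$ and $(E)_n$. For the regularity in (i), $\varepsilon>0$ renders $A_\varepsilon$ uniformly elliptic with smooth coefficients and $v_n^\varepsilon\in C_0^\infty(\Omega)$, so a Schauder bootstrap yields $w_n^\varepsilon\in C^2(\overline\Omega)$. For (ii), the right-hand side $2|u_n|^{q(x)-2}u_n\in L^\infty(\Omega)$, so Fan's $C^{1,\alpha}$ regularity theorem for $p(x)$-Laplacian equations with bounded data applies.

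\textbf{Part (iii).} Testing $(E)_n^\varepsilon$ with $w_n^\varepsilon$ and using (\ref{3o}) yields $\varepsilon$-uniform bounds on $\int_\Omega|\nabla w_n^\varepsilon|^{p(x)}\,dx$ and $\int_\Omega|w_n^\varepsilon|^{q(x)}\,dx$; a Moser iteration on $(E)_n^\varepsilon$ with bounded right-hand side also provides a uniform $L^\infty$ bound. Along a subsequence, $w_n^\varepsilon\rightharpoonup\tilde w$ weakly in $W_0^{1,p(\cdot)}\cap L^{q(\cdot)}$, strongly in $L^{p(\cdot)}$ by Proposition \ref{rkt}, and a.e. A Minty-Browder passage using the monotonicity of $\xi\mapsto|\xi|^{p(x)-2}\xi$ (with the $\varepsilon$-perturbation vanishing as $\varepsilon\to 0$) identifies $\tilde w=w_n$. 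Testing $(E)_n^\varepsilon$ and $(E)_n$ with their own solutions and passing to the limit, weak lower semi-continuity forces $\int_\Omega(|\nabla w_n^\varepsilon|^2+\varepsilon)^{p(x)/2}\,dx\to\int_\Omega|\nabla w_n|^{p(x)}\,dx$; the $(S_+)$-property of the $p(x)$-Laplacian then upgrades this to (\ref{6o}) together with strong $L^{p(\cdot)}$ convergence of the gradients. Statement (\ref{7o}) follows from the uniform $L^\infty$ bound combined with the strong $L^{p^-}$ convergence and Lemma \ref{OTNOT}.

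\textbf{Part (iv).} Rewrite the equation for $u$ as $|u|^{q(x)-2}u-\Delta_{p(x)}u=2|u|^{q(x)-2}u$, subtract from $(E)_n$, and test with $w_n-u\in W_0^{1,p(\cdot)}(\Omega)$:
\begin{align*}
&\int_\Omega\bigl(|w_n|^{q(x)-2}w_n-|u|^{q(x)-2}u\bigr)(w_n-u)\,dx\\
&\quad+\int_\Omega\bigl(|\nabla w_n|^{p(x)-2}\nabla w_n-|\nabla u|^{p(x)-2}\nabla u\bigr)\cdot\nabla(w_n-u)\,dx\\
&\qquad=2\int_\Omega\bigl(|u_n|^{q(x)-2}u_n-|u|^{q(x)-2}u\bigr)(w_n-u)\,dx,
\end{align*}
with both left-hand integrals non-negative by monotonicity. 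Since $|u_n|\leqslant|u|$ pointwise and $u_n\to u$ a.e., dominated convergence gives $|u_n|^{q(x)-2}u_n\to|u|^{q(x)-2}u$ strongly in $L^{p'(\cdot)}(\Omega)$; combined with a uniform $W_0^{1,p(\cdot)}$ bound on $w_n$ (obtained by testing $(E)_n$ with $w_n$ and applying (\ref{Hol})), the right-hand side vanishes. Hence both monotone integrals on the left tend to zero; the $(S_+)$-property then yields $w_n\to u$ strongly in $W_0^{1,p(\cdot)}$, which is equivalent via (\ref{L6}) to (\ref{8o}), while the corresponding compact Sobolev embedding in Proposition \ref{rkt} delivers (\ref{9o}).

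\textbf{Main obstacle.} The delicate point is converting weak convergence into the \emph{modular} (as opposed to merely norm) convergences (\ref{6o}) and (\ref{8o}): in variable exponent spaces these two notions do not coincide, and the $(S_+)$-argument must be coupled to precise energy equalities that carefully track both the $\varepsilon$-regularization in $A_\varepsilon$ and the cutoff $g_n$ so that no information is lost when $\varepsilon\to 0$ or $n\to\infty$.
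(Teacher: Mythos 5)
Your overall strategy is sound and, for parts (i)--(iii), runs parallel to the paper's: the paper also gets (ii) by the direct method (strict convexity, coercivity, weak lower semicontinuity of the modulars), also derives the $L^\infty$ bound by testing with $|w_n|^{r-2}w_n$ and letting $r\to\infty$, and also passes to the limit in $\varepsilon$ via uniform $L^\infty$ and gradient-modular bounds plus Proposition \ref{rkt} and Lemma \ref{OTNOT}. The main methodological difference is that where you invoke Minty--Browder and the $(S_+)$-property, the paper phrases the identification of limits through the subdifferential inequality for the convex functional $\phi_\varepsilon(z)=\int_\Omega\frac{1}{p(x)}(|\nabla z|^2+\varepsilon)^{p(x)/2}dx$ and obtains the modular convergences (\ref{6o}), (\ref{8o}), (\ref{9o}) by a $\liminf$/$\limsup$ sandwich between weak lower semicontinuity of the modulars and the energy identities coming from testing with the solutions themselves; this is the convex-analysis version of the same trick. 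For (i) the paper does not use a variational argument at all: it cites the quasilinear existence theory of Gilbarg--Trudinger (Theorem 15.10) for the uniformly elliptic regularized operator $A_\varepsilon$. In part (iv) your route is genuinely different and more direct: subtracting the equations and testing with $w_n-u$ replaces the paper's detour through smooth test functions, the subdifferential inequality, and the elementary vector inequality for $|a|^{p-2}a-|b|^{p-2}b$.

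Two points need repair. First, in (ii) you cannot apply Fan's $C^{1,\alpha}$ theorem merely because $2|u_n|^{q(x)-2}u_n\in L^\infty$: the right-hand side of (\ref{En}) also contains $-|w_n|^{q(x)-2}w_n$, so you must first prove $w_n\in L^\infty(\Omega)$; this is exactly what the paper's $r\to\infty$ iteration supplies, and your proposal only gestures at such an iteration in part (iii). Second, your derivation of (\ref{9o}) from ``the corresponding compact Sobolev embedding in Proposition \ref{rkt}'' fails in general, because the exponent $q(\cdot)$ of problem (\ref{E}) is not assumed subcritical --- indeed the nonexistence theorems concern $q^-\geqslant (p^+)^*$, where $W_0^{1,p(\cdot)}(\Omega)$ does not embed compactly (or at all) into $L^{q(\cdot)}(\Omega)$. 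Fortunately your own monotonicity identity already rescues this: once the term $\int_\Omega(|w_n|^{q(x)-2}w_n-|u|^{q(x)-2}u)(w_n-u)\,dx$ tends to zero, the strict monotonicity of $t\mapsto|t|^{q(x)-2}t$ (for $q^-\geqslant 2$, via the same elementary inequality used for the gradient term) gives $\int_\Omega|w_n-u|^{q(x)}dx\to 0$ and hence (\ref{9o}) directly, so the appeal to compactness should simply be deleted. A minor slip of the same kind: the convergence $|u_n|^{q(x)-2}u_n\to|u|^{q(x)-2}u$ should be asserted in $L^{q'(\cdot)}(\Omega)$ (as in the paper), not $L^{p'(\cdot)}(\Omega)$, since it is paired against $w_n-u\in L^{q(\cdot)}(\Omega)$.
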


\begin{proof} (i)
Since $u_n\in L^{\infty}(\Omega),$ there exists a sequence $\{v_n^\varepsilon\}\subset C_0^\infty(\Omega)$ satisfying
\begin{eqnarray}
\label{3o} \|v_n^\varepsilon\|_{L^\infty(\Omega)} \leqslant  C_o,\qquad \qquad\mbox{ for all } \varepsilon\in (0,1),\\
\label{4o} v_n^\varepsilon  \to  2|u_n|^{q(x)-2}u_n,\mbox{ strongly in }L^r(\Omega)\mbox{ as }\varepsilon\to 0,\mbox{ for all }r\in[1,\infty).
\end{eqnarray}
Given that $v_n^\varepsilon$ belongs to $C^2(\overline{\Omega})$ and since $A_\varepsilon u$ is elliptic, Theorem 15.10 in \cite{gt} guarantees the existence of a unique solution $w_n^\varepsilon\in C^2(\overline{\Omega})$ of (\ref{Ene}).

(ii) Set $$F(z)=\int_{\Omega}\frac{|\nabla z |^{p(x)}}{p(x)}dx+\int_{\Omega}\frac{|z|^{q(x)}}{q(x)}dx-2\int_{\Omega}|u_n|^{q(x)-2}u_nzdx,$$
so that $F(z)$ is strictly convex, coercive and Fr\'echet differentiable on $\left(W_0^{1,p(x)}\cap L^{q(x)}\right)(\Omega).$
Now, if $z_n\rightharpoonup z_o$ weakly in $\left(W_0^{1,p(x)}\cap L^{q(x)}\right)(\Omega),$ then since $p\in \mathcal{P}(\Omega,\mu)$ (for definitions see \cite{dhhr}) the modulars  $\int_{\Omega}|\nabla z |^{p(x)}/p(x)dx$ and $\int_{\Omega}|z|^{q(x)}/q(x)dx$ are sequentially weakly lower semicontinuous  \cite[Thm. 3.2.9]{dhhr} and $\int_{\Omega} |u_n|^{q(x)-2}u_nzdx\in (L^{q(x)}(\Omega))^*$ we conclude $\liminf_{n\to\infty}F(z_n)\geqslant F(z_o).$
Since $F$ is bounded below, there exists $w_n\in \left(W_0^{1,p(x)}\cap L^{q(x)}\right)(\Omega)$ where $F$ attains its minimum, and since $F$ is Fr\'echet differentiable $\langle F'(w_n),\phi\rangle=0$ for all $\phi\in \left(W_0^{1,p(x)}\cap L^{q(x)}\right)(\Omega),$ i.e. $w_n$ solves $(\ref{Ene})$ in the weak sense and the uniqueness follows from the strict convexity of $F(z).$ Multiplying (\ref{Ene}) by $|w_n|^{r-2}w_n$ ($r\geqslant 2$ constant), using Young's $\varepsilon$-inequality with $\varepsilon=1/2,$ and considering that $|u_n|^{q(x)-2}u_n$ belongs to $L^\infty(\Omega)$ we obtain

\begin{align}
\label{10o}
\nonumber\int_{\Omega}|w_n|^{q(x)+r-2}dx&+(r-1)\int_{\Omega}|w_n|^{p(x)}|w_n|^{r-2}dx\\
\nonumber &\leqslant \int_{\Omega}2(n+1)^{q(x)-1}|w_n|^{r-1}dx\\
& \leqslant \frac{1}{2}\int_{\Omega}|w_n|^{q(x)+r-2}dx+2^{(q^+ +2r-3)/(q^--1)}(n+1)^{q^++r-2}|\Omega|.
\end{align}
So, by \cite[Thm. 1.3, p. 427]{fz}
\begin{equation}
\nonumber
\|w_n\|^{q^{\pm}+r-2}_{L^{q(x)+r-2}}\leqslant 2\cdot2^{(q^++2r-3)/(q^--1)}(n+1)^{q^++r-2}|\Omega|,
\end{equation}
where
\begin{eqnarray}
\nonumber
q^{\pm}=
\begin{cases}
q^+ \quad \mbox{ if }\|w_n\|_{L^{q(x)+r-2}}<1,\\
q^- \quad \mbox{ if }\|w_n\|_{L^{q(x)+r-2}}>1.
\end{cases}
\end{eqnarray}

In this way we can obtain an a priori bound for $\|w_n\|_{L^{q(x)+r-2}}$ independent of $r.$ 
Letting $r\to\infty$ we get an $L^\infty$-estimate for $w_n.$ Therefore using \cite[Thm. 1.2, p. 400]{f}
we conclude $w_n\in C^{1,\alpha}(\overline{\Omega}).$

(iii) With a similar argumentation as in (ii) we obtain
\begin{equation}\label{11o}
\|w_n^\varepsilon\|_{L^\infty(\Omega)}\leqslant C_n\quad\mbox{ for all }\quad\varepsilon>0.
\end{equation}

Multiply (\ref{Ene}) by $w_n^\epsilon,$ to obtain
$$\int_{\Omega}|w_n^\varepsilon|^{q(x)}dx+\int_{\Omega}(|\nabla w_n^\varepsilon|^2+\varepsilon)^{(p(x)-2)/2}|\nabla w_n^\varepsilon|^2dx=\int_{\Omega}v_n^\varepsilon w_n^\varepsilon dx.$$

On the other hand, note that
\begin{eqnarray}\nonumber
\int_{\Omega}|\nabla w_n^\varepsilon|^{p(x)}dx&=&\int_{\Omega}(|\nabla w_n^\varepsilon|^2)^{(p(x)-2)/2}|\nabla w_n^\varepsilon|^2dx\\
\nonumber
&\leqslant&\int_{\Omega}(|\nabla w_n^\varepsilon|^2+\varepsilon)^{(p(x)-2)/2}|\nabla w_n^\varepsilon|^2dx
\end{eqnarray}
And hence
$$\int_{\Omega}|\nabla w_n^\varepsilon|^{p(x)}dx\leqslant \int_{\Omega}v_n^\varepsilon w_n^\varepsilon dx.$$
Now use Young's inequality and the fact that $q(x),q'(x)>1$ to obtain
$$\int_{\Omega}|\nabla w_n^\varepsilon|^{p(x)}dx\leqslant \int_{\Omega}|v_n^\varepsilon|^{q'(x)}dx+\int_{\Omega}|w_n^\varepsilon|^{q(x)} dx.$$
so by (\ref{11o}) and given that $v_n\in C_0^\infty(\Omega)$ we deduce
\begin{equation}
\label{12o}\|\nabla w_n^\varepsilon\|_{L^{p(x)}(\Omega)}\leqslant C_n\quad\mbox{ for all }\quad\varepsilon>0.
\end{equation}
Together (\ref{11o}), (\ref{12o}), and  compactness Proposition \ref{rkt}  
and Lemma \ref{OTNOT} imply that
there exists a sequence $\{w_n^{\varepsilon_k}\}$ such that  
for $p^+<N$

 \begin{equation}\label{13o} w_n^{\varepsilon_k}\to w\;\mbox{ strongly in } L^{r}(\Omega),\text{ with }\;1\leqslant r^- <r(x)<r^+<\infty\end{equation}
 \begin{equation}\label{14o}\nabla w_n^{\varepsilon_k}\rightharpoonup \nabla w \quad\mbox{ weakly in }\quad L^{p(x)}(\Omega),\phantom{1\leqslant r<\infty}\end{equation}
 \begin{equation}\label{15o} \int_{\Omega}|w_n^{\varepsilon_k}|^{q(x)-2}w_n^{\varepsilon_k}v \to \int_{\Omega} |w|^{q(x)-2}wv \quad\mbox{as}\quad\varepsilon_k\to 0, \quad\mbox{for all}\quad v\in W_0^{p(x)}(\Omega).\end{equation}

 Weak convergence holds since $L^{p(x)}$ spaces are  uniformly convex \cite[Thm. 3.4.9]{dhhr}, and hence reflexive.

From this point we refer to \cite{kk} for all the notations and results concerning to sub\-dif\-ferentials.
Set $$\phi_\varepsilon(z):=\int_{\Omega}\frac{1}{p(x)}(|\nabla z|^2+\varepsilon)^{p(x)/2}dx$$
with $D(\phi_\varepsilon)=W_0^{1,p(x)}(\Omega)$ so that $\phi_\varepsilon$ is a convex operator according to definition in section 1.3.3 p. 24 in \cite{kk}. Noting that $\phi_\varepsilon$ is Fr\'echet differentiable and that actually
$$\phi_\varepsilon '(z)v=\langle A_\varepsilon z,v\rangle=\int_{\Omega}(|\nabla z|^2+\varepsilon)^{p(x)/2}\nabla z\cdot \nabla vdx.$$
So according to \cite{kk} section 4.2.2, $A_\varepsilon\in \partial \phi_\varepsilon$ where $\partial \phi_\varepsilon$
is the subdifferential of $\phi_\varepsilon.$ Hence $w_n^\varepsilon$ satisfies
$$\phi_\varepsilon (v)-\phi_\varepsilon (w_n^\varepsilon)\geqslant \int_{\Omega}(|\nabla w_n^\varepsilon|^2+\varepsilon)^{p(x)/2}\nabla w_n^\varepsilon\cdot\nabla (v-w_n^\varepsilon)dx,\qquad \forall v\in W^{1,p(x)}_0(\Omega).$$
Now, by (\ref{Ene})
\begin{equation}
\label{16o}
\phi_\varepsilon (v)-\phi_\varepsilon (w_n^\varepsilon)\geqslant \int_{\Omega}(-|w_n^\varepsilon|^{q(x)-2}w_n^\varepsilon+v_n^\varepsilon)\cdot(v-w_n^\varepsilon)dx.
\end{equation}
On the other hand, given strong convergence of $w^\varepsilon_n\to w_n$ as $\varepsilon \to 0$ and strong convergence of $v_n\to 2|u_n|^{q(x)-2}u_n$ in $L^1(\Omega),$ we have that $v_n^\varepsilon w_n^\varepsilon \to 2|u_n|^{q(x)-2}u_nw_n$ as $\varepsilon\to 0$
in $L^1(\Omega)$ since
\begin{eqnarray}
\label{16o'}
\nonumber
\int_{\Omega}|v_n^\varepsilon w_n^\varepsilon - 2|u_n|^{q(x)-2}u_nw_n|dx&\leqslant& \int_{\Omega} |v_n^\varepsilon||w_n^\varepsilon-w_n| dx\\
\nonumber& & +\int_{\Omega}|w_n|\left|v_n^\varepsilon-2|u_n|^{q(x)-2}u_n\right|dx\\
\nonumber &\leqslant& C_o\int_{\Omega} |w_n^\varepsilon-w_n| dx\\
& &+\int_{\Omega}|w_n|\left|v_n^\varepsilon-2|u_n|^{q(x)-2}u_n\right|dx,
\end{eqnarray}
given that (\ref{3o}) holds. 
That the last integral goes to zero as $\varepsilon \to 0$ follows after H\"older's inequality for variable exponent spaces $w_n\in L^r(\Omega),$ and (\ref{4o}).

Given that $\phi_\varepsilon(v)\to\phi_0(v) $ as  $\varepsilon\to 0$ for all $v\in W^{1,p(x)}(\Omega)$ and
\begin{equation}
\label{17o}
\liminf_{k\to\infty}\phi_{\varepsilon_k}(w_n^{\varepsilon_k})\geqslant \phi_{\varepsilon_k}(w)\geqslant \phi_0(w)
\end{equation}
since modulars are weakly lower semicontinuous \cite[Thm. 2.2.8]{dhhr}. Taking limits as $\varepsilon \to 0$ in (\ref{16o}),  and using (\ref{4o}), (\ref{13o}), (\ref{15o}) we get
\begin{eqnarray}
\nonumber
\phi_0(v)-\phi_0(w)&\geqslant& \int_{\Omega}\left(-|w|^{q(x)-2}w+2|u_n|^{q(x)-2}u_n\right)\cdot (v-w)dx,
\end{eqnarray}
for all $v\in W_0^{1,p(x)}(\Omega)$
which imply, by subdifferential's definition, that
\begin{equation}
\label{Io1}
\int_{\Omega}\mbox{div}(|\nabla w|^{p(x)-2}\nabla w)\cdot \nabla\varphi=
\int_{\Omega}(-|w|^{q(x)-2}w+2|u_n|^{q(x)-2}u_n)\cdot \varphi,
\end{equation}
for all $\varphi\in W_0^{1,p(x)}(\Omega).$ We conclude that $w=w_n,$ since the argument above does not depend on the choice of $\{\varepsilon_k\}.$

Multiply equation in (\ref{En}) by $w_n$ and equation in (\ref{Ene}) by $w_n^\varepsilon$ and integrate by parts to get $$\int_{\Omega}|\nabla w_n|^{p(x)}dx=-\int_{\Omega}|w_n|^{q(x)}dx+2\int_{\Omega}|u_n|^{q(x)-2}u_nw_ndx$$
$$\int_{\Omega}(|\nabla w^\varepsilon_n|^2 +\varepsilon)^{(p(x)-2)/2}|\nabla w_n^\varepsilon|^2 dx=-\int_{\Omega}|w_n^\varepsilon|^{q(x)}dx+\int_{\Omega} v_n^\varepsilon w_n^\varepsilon dx.$$
So that (\ref{4o}) and (\ref{13o}) imply
\begin{equation}
\label{18o}
\int_{\Omega}(|\nabla w_n^\varepsilon|^2+\varepsilon)^{(p(x)-2)/2}|\nabla w_n^\varepsilon|^2dx\to\int_{\Omega}|\nabla w_n|^{p(x)}dx\mbox{ as }\varepsilon\to 0.
\end{equation}
Take $v=w=w_n$ in (\ref{16o}) and let $\varepsilon \to 0$ in (\ref{16o}) to obtain
$$\limsup_{\varepsilon\to 0}\phi_\varepsilon (w_n^\varepsilon)\leqslant \phi_0(w_n),$$

Last inequality and (\ref{17o}) imply
\begin{equation}
\label{19o}
\int_{\Omega}(|\nabla w_n^\varepsilon|^2+\varepsilon)^{p(x)/2}dx\to \int_{\Omega}|\nabla w_n|^{p(x)}dx\mbox{ as }\varepsilon \to 0.
\end{equation}

Moreover, since (\ref{14o}) holds then
$$\liminf_\varepsilon \int_{\Omega}|\nabla w_n^\varepsilon|^{p(x)}dx\geqslant \int_{\Omega}|\nabla w_n|^{p(x)}$$
since modulars are weakly lower semicontinuous.

On the other hand, since $(|\nabla w_n^\varepsilon|^2)^{p(x)/2}\leqslant (|\nabla w_n^\varepsilon|^2+\varepsilon)^{p(x)/2}$ we have
$$\limsup_\varepsilon \int_{\Omega}|\nabla w_n^\varepsilon|^{p(x)}dx\leqslant \limsup_\varepsilon \int_{\Omega}(|\nabla w_n^\varepsilon|^2+\varepsilon)^{p(x)/2}dx\leqslant \int_{\Omega}|\nabla w_n|^{p(x)}dx$$
Therefore we conclude (\ref{6o}).

iv) We proceed first by noticing that
 \begin{equation}
 \label{21o}
 |u_n|^{q(x)-2}u_n\to |u|^{q(x)-2}u\quad\mbox{ strongly in }L^{q'(x)}(\Omega)\mbox{ as }n\to\infty,
 \end{equation}
by the uniform convexity of $L^{q'(x)}(\Omega).$
Multiply (\ref{En}) by $w_n$ and integrate by parts to obtain

\begin{eqnarray}
\label{22o}
\int_{\Omega} |w_n|^{q(x)}dx+\int_{\Omega} |\nabla w_n|^{p(x)}dx &=& 2\int_{\Omega}|u_n|^{q(x)-2}u_nw_ndx\\
& \leqslant & \nonumber 4\||u_n|^{q(x)-1}\|_{L^{q'(x)}(\Omega)}\|w_n\|_{L^{q(x)}(\Omega)},
\end{eqnarray}
by H\"{o}lder's inequality for variable exponent Sobolev spaces \cite[lemma 2.6.5]{dhhr}. Now, using \cite[Thm. 1.3]{fz} and (\ref{22o}) we get
\begin{equation}
\label{22mio}
\|w_n\|^{q^{\pm}}_{L^{q(x)}(\Omega)}+\|\nabla w_n\|^{p^\pm}_{L^{p(x)}(\Omega)}\leqslant C\|w_n\|_{L^{q(x)}(\Omega)},
\end{equation}

where
\begin{eqnarray}
\nonumber q^{\pm}& = &\begin{cases}
                  q^+\quad\mbox{ if }\quad\| w_n\|_{L^{q(x)}(\Omega)}<1\\
                  q^-\quad\mbox{ if }\quad\| w_n\|_{L^{q(x)}(\Omega)}\geqslant1,
                  \end{cases} \\\nonumber
         p^\pm & = & \begin{cases}
                p^+\quad\mbox{ if }\quad\| \nabla w_n\|_{L^{q(x)}(\Omega)}<1\\
                p^-\quad\mbox{ if }\quad\|\nabla w_n\|_{L^{q(x)}(\Omega)}\geqslant1,
               \end{cases}
\end{eqnarray}
The fact that $p^\pm,q^\pm>1$ imply that $\|w_n\|^{q^{\pm}}_{L^{q(x)}(\Omega)},\|\nabla w_n\|^{p^\pm}_{L^{p(x)}(\Omega)}\leqslant C.$
We use again Proposition \ref{rkt} and Lemma \ref{OTNOT}
 to obtain that, up to a subsequence $\{n_k\},$


\begin{eqnarray}
\label{23o}\nabla w_{n_k}\rightharpoonup \nabla w\quad\mbox{ weakly in }L^{p(x)}(\Omega)\\
\label{24o} w_{n_k}\rightharpoonup w\quad\mbox{ weakly in }L^{q(x)}(\Omega)\\
\nonumber w_{n_k}\to w\quad\mbox{ strongly in }L^{q(x)}(\Omega)\text{ for all }q \text{ such that } 1\leqslant q^-<q(x)<,q^+<\infty\\
\label{25o}\int_\Omega |w_{n_k}|^{q(x)-2}w_{n_k}\cdot vdx\to \int_\Omega |w|^{q(x)-2}w\cdot vdx\quad\mbox{ for all }v\in L^{q'(x)}(\Omega)\mbox{ as }k\to\infty.
\end{eqnarray}
Given that $w_n$ is solution of (\ref{En}) subdifferential's definition leads to
\begin{eqnarray}\nonumber
 \int_\Omega\frac{1}{p(x)}|\nabla v|^{p(x)}dx-\int_\Omega\frac{1}{p(x)}|\nabla w_n|^{p(x)}dx=\int_{\Omega}\frac{1}{p(x)}|\nabla v|^{p(x)}dx-\int_\Omega\frac{1}{p(x)}|\nabla w_n|^{p(x)}dx\\\label{26o}
  \geqslant \int_{\Omega}(-|w_n|^{q(x)-2}w_n+2|u_n|^{q(x)-2}u_n)(v-w_n)dx\phantom{----------}\\\nonumber
  \geqslant \int_\Omega|w_n|^{q(x)}dx-\int_\Omega |w_n|^{q(x)-2}w_nvdx+2\int_\Omega|u_n|^{q(x)-2}u_n(v-w_n)dx,
\end{eqnarray}
for all $v\in C_0^\infty(\Omega)$ and for $n$ such that $supp\,v\subset \Omega.$ Let $n=n_k\to\infty$ in (\ref{26o}) and recall (\ref{21o}), (\ref{23o}), (\ref{24o}) and (\ref{25o}) to obtain
\begin{eqnarray}\label{27o}
\int_\Omega\frac{1}{p(x)}|\nabla v|^{p(x)}dx-\int_\Omega\frac{1}{p(x)}|\nabla w|^{p(x)}dx \geqslant \int_\Omega (-|w|^{q(x)-2}w+2|u|^{q(x)-2}u)(v-w)dx,\quad
\end{eqnarray}
for all $v\in C_0^\infty(\Omega).$ Now put $v=w+tz$ with  $z\in C_o^\infty(\Omega)$ and let $t\to 0^+,$ $t\to 0^-$ in (\ref{27o}) and use the definition of Fr\'echet derivative  to see that $w$ satisfies
$$\int_\Omega |\nabla w|^{p(x)-2}\nabla w\cdot \nabla z+\int_\Omega |w|^{q(x)-2}w zdx=2\int_\Omega |u|^{q(x)-2}u zdx$$ for all $z\in C_o^\infty(\Omega).$ Hence
$$|w|^{q(x)-2}w-\Delta_{p(x)}w=|u|^{q(x)-2}u-\Delta_{p(x)}u$$
in the sense of distributions. That $w=u$  follows from well known inequality
$$|a-b|^p\leqslant C_p\left\{  (|a|^{p-2}a-|b|^{p-2}b)\cdot(a-b)\right\}^{s/2}(|a|^p+|b|^p)^{1-s/2}$$
which holds  for all $a,b\in \mathbb{R}^N$ where $s=p$ if $p\in (1,2)$ and $s=2$ if $p\geqslant 2,$ and $C_p>0$ does not depend on $a,b.$ Since the above argument does not depend  on the choice of subsequences, (\ref{23o}), (\ref{24o}) and (\ref{25o}) hold for $n_k=n.$

Taking into account (\ref{21o}), (\ref{22o}), (\ref{23o}) and (\ref{24o}) we get
\begin{eqnarray}
\nonumber 2\int_\Omega |u|^{q(x)}dx&=&\int _\Omega |u|^{q(x)}dx+\int_\Omega |\nabla u|^{p(x)}dx\\
\nonumber &\leqslant & \liminf_{n\to\infty}\left(\int _\Omega |w_n|^{q(x)}dx+\int_\Omega |\nabla w_n|^{p(x)}dx \right)\\
\nonumber & = & \lim_{n\to\infty}\left(\int _\Omega |w_n|^{q(x)}dx+\int_\Omega |\nabla w_n|^{p(x)}dx \right)\\
\nonumber &\leqslant & 2\int_\Omega |u|^{q(x)}dx.
\end{eqnarray}
Consequently
$$\lim_{n\to\infty}\left(\int _\Omega |w_n|^{q(x)}dx+\int_\Omega |\nabla w_n|^{p(x)}dx \right)=\int _\Omega |u|^{q(x)}dx+\int_\Omega |\nabla u|^{p(x)}dx $$
Further, notice that
\begin{eqnarray}
\nonumber \int_\Omega |u|^{q(x)}dx & \leqslant & \liminf_{n\to\infty} \int_\Omega |w_n|^{q(x)}dx\leqslant \limsup_{n\to\infty}\int_\Omega |w_n|^{q(x)}dx\\
\nonumber &=& \limsup_{n\to\infty}\left( \int_\Omega |w_n|^{q(x)}dx+\int_\Omega \frac{|\nabla w_n|^{p(x)}}{p(x)}dx-\int_\Omega \frac{|\nabla w_n|^{p(x)}}{p(x)}dx\right)\\
\nonumber &\leqslant &\limsup_{n\to\infty}\left( \int_\Omega |w_n|^{q(x)}dx+\int_\Omega \frac{|\nabla w_n|^{p(x)}}{p(x)}dx\right)-\liminf_{n\to\infty}\int_\Omega \frac{|\nabla w_n|^{p(x)}}{p(x)}dx\\
\nonumber &\leqslant& \int_\Omega |u|^{q(x)}dx.
\end{eqnarray}
Therefore $$\lim_{n\to\infty}\int_\Omega |w_n|^{q(x)}dx=\int_\Omega |u|^{p(x)}dx$$ and $$\lim_{n\to\infty} \int_\Omega |\nabla w_n|^{p(x)}dx=\int_\Omega |\nabla u|^{p(x)}dx.$$\end{proof}

In order to obtain a Pohozaev type inequality we introduce the function
\begin{equation}
\label{psformula}
\mathcal{F}(x,u,s):=\frac{|u(x)|^{q(x)}}{q(x)}+\frac{(|s|^2+\varepsilon)^{p(x)/2}}{p(x)}-v_n^\varepsilon (x)u(x)\end{equation}
where $s=(s_1,\dots,s_N),$ which will be used in the context of a Pucci-Serrin formula \cite{ps}.

\begin{theorem}[Pohozaev type inequality]
\label{l4.2}
Let $u$ be a weak solution of (\ref{E}) belonging to $\mathcal{P}$. Then $u$ satisfies

\begin{multline}
\label{poho2}
-\int_\Omega\frac{N}{q(x)}|u|^{q(x)}dx+\int_\Omega\frac{N-p(x)}{p(x)}|\nabla u|^{p(x)}dx
\\+\int_\Omega x\cdot\nabla p(x)\frac{|\nabla u|^{p(x)}}{p(x)^2}\log\left( e^{-1}|\nabla u|^{p(x)}\right)dx
\\-\int_\Omega x\cdot\nabla q(x)\frac{|u|^{q(x)}}{q(x)^2}\log\left( e^{-1}|u|^{q(x)}\right)dx
+R\leq 0,
\end{multline}
where  $$\displaystyle R=\frac{p^\dag-1}{p^+}\limsup_{n\to\infty}\limsup_{\varepsilon\to 0}\int_{\partial \Omega}\left(|\nabla w_n^\varepsilon|^2+\varepsilon\right)^{p(x)/2}(x\cdot\nu(x))dS, p^\dag=\min_{x\in \Omega}\left\{2,p(x)\right\},$$
and $w_n^\varepsilon$ is the solution of (\ref{Ene}) uniquely determined by $u.$
\end{theorem}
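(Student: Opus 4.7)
The plan is to establish the classical Pohozaev identity for the smooth approximation $w_n^\varepsilon\in C^2(\overline{\Omega})$ provided by Lemma~\ref{3.1}(i), and then to pass to the limit first in $\varepsilon\to 0$ and afterwards in $n\to\infty$, relying on the convergences (\ref{6o})--(\ref{9o}). Since $w_n^\varepsilon$ is a $C^2$ solution of $(E)_n^\varepsilon$, one may multiply the equation by $x\cdot\nabla w_n^\varepsilon$ and integrate by parts freely; this is precisely the Pucci--Serrin identity associated with the Lagrangian $\mathcal{F}(x,u,s)$ defined in (\ref{psformula}).

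Each volume integrand is then computed separately. For the zero--order term, the elementary identity
\begin{equation*}
|w|^{q(x)-2}w\,\partial_i w = \frac{\partial_i|w|^{q(x)}}{q(x)} - \frac{|w|^{q(x)}\log|w|}{q(x)}\,\partial_i q(x),
\end{equation*}
followed by integration by parts in $\partial_i$ (the boundary term vanishing because $w_n^\varepsilon=0$ on $\partial\Omega$), produces the coefficient $-N/q(x)$ of (\ref{poho2}) and, using $1-\log|w|^{q(x)}=-\log(e^{-1}|w|^{q(x)})$, the first logarithmic integral. An entirely parallel computation applied to $\int A_\varepsilon w_n^\varepsilon\,(x\cdot\nabla w_n^\varepsilon)\,dx$ yields the coefficient $(N-p(x))/p(x)$ and the second logarithmic integral, modulo an explicit $\varepsilon$--small remainder arising from $(|s|^2+\varepsilon)^{(p(x)-2)/2}|s|^2=(|s|^2+\varepsilon)^{p(x)/2}-\varepsilon(|s|^2+\varepsilon)^{(p(x)-2)/2}$. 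The forcing contribution $\int v_n^\varepsilon(x\cdot\nabla w_n^\varepsilon)\,dx$ combines, after (\ref{4o}) and the absorption of the auxiliary $|w_n|^{q(x)-2}w_n$ in (\ref{En}), with the analogous piece on the left to match exactly problem (E) in the limit.

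On $\partial\Omega$ the Dirichlet condition forces $\nabla w_n^\varepsilon=(\nabla w_n^\varepsilon\cdot\nu)\nu$, so the surface contribution collapses to
\begin{equation*}
-\int_{\partial\Omega}\frac{p(x)-1}{p(x)}\,(|\nabla w_n^\varepsilon|^2+\varepsilon)^{p(x)/2}\,(x\cdot\nu)\,dS
\end{equation*}
plus a harmless $\varepsilon$--correction. A short case analysis on whether $p(x)\geqslant 2$ or $p(x)<2$ gives the pointwise bound $(p(x)-1)/p(x)\geqslant(p^\dag-1)/p^+$, and taking iterated $\limsup_{n\to\infty}\limsup_{\varepsilon\to 0}$ converts the identity into the inequality (\ref{poho2}), with $R$ arising precisely from this lower bound. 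Passage to the limit in the interior modulars uses (\ref{6o})--(\ref{7o}) together with Lemma~\ref{OTNOT} for the $\varepsilon$--limit, and (\ref{8o})--(\ref{9o}) for the $n$--limit; the hypothesis $u\in\mathcal{P}$ from (\ref{P}) supplies the $L^{p'(\cdot)}$ integrability needed to pair $x_i|u|^{q(x)-2}u$ with $\partial_i u$ via the H\"older inequality (\ref{Hol}).

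The principal obstruction will be the convergence of the two logarithmic modulars, since $t\mapsto t\log t$ is singular at both $0$ and $\infty$ and changes sign. I would split $\Omega$ into $\{|w|\geqslant 1\}$ and $\{|w|<1\}$ (and analogously for $|\nabla w|$): on the first set $|\log(e^{-1}|w|^{q(x)})|$ is dominated by an arbitrarily small positive power of $|w|^{q(x)}$, so a Vitali--type argument driven by the modular convergence (\ref{9o}) applies; on the second set the elementary estimate $t|\log t|\leqslant C_\eta(t^{1-\eta}+t)$ reduces matters to $L^{q(\cdot)-\eta q^+}$ convergence, which is delivered by Lemma~\ref{OTNOT}. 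A parallel argument with (\ref{8o}) handles the $p$--logarithm. The absence of strong trace control for $\nabla w_n^\varepsilon$ on $\partial\Omega$ is precisely what forces the final statement to be an inequality rather than an identity, with $R$ entering through the iterated $\limsup$.
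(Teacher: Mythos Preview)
Your proposal is correct and follows essentially the same route as the paper: apply the Pucci--Serrin identity (\ref{psf}) to the $C^2$ approximation $w_n^\varepsilon$, split the boundary term via the $\varepsilon$--correction and the case analysis $p(x)\lessgtr 2$ to extract the factor $(p^\dag-1)/p^+$, and pass to the limit using Lemma~\ref{3.1} together with a domination argument for the logarithmic integrands based on (\ref{e3})--(\ref{e4}). The only cosmetic differences are that the paper carries along the free constant $a$ in (\ref{psf}) and eliminates it at the end via (\ref{a0}), and that it uses Lebesgue dominated convergence (after extracting an $L^1$ majorant from the strong convergence (\ref{7o})) where you invoke a Vitali-type argument; both choices lead to the same conclusion.
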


\begin{proof} In (\ref{psformula}) denote by  $\mathcal{F}_s(x,u,s)=(\partial _{s_1}\mathcal{F},\dots,\partial_{s_N}\mathcal{F}),$ so that
$$\partial_{s_i} \mathcal{F}(x,u,s)= (|s|^2+\varepsilon)^{p(x)/2-1}s_i.$$
hence we denote
$$\partial_{s_i} \mathcal{F}(x,u,\nabla u)= (|\nabla u|^2+\varepsilon)^{p(x)/2-1}\partial_iu.$$
and
$$\mathcal{F}_s(x,u,\nabla u)=(|\nabla u|^2+\varepsilon)^{(p(x)-2)/2}\nabla u.$$
So that
$$\mbox{div}\,\mathcal{F}(x,u,\nabla u)=-A_\varepsilon u, $$
where, we recall, $A_\varepsilon$ is defined after (\ref{Ene}). Finally, we denote
\begin{eqnarray}\nonumber
\nabla \mathcal{F}(x,u,\nabla u)&=&(\partial_{x_1}\mathcal{F},\dots,\partial_{x_N}\mathcal{F})\\
\nonumber& = &(\partial_{1}\mathcal{F},\dots,\partial_{N}\mathcal{F})
\end{eqnarray}
with
\begin{eqnarray*}
\partial_i \mathcal{F}&=&\partial_i\left(\frac{|u(x)|^{q(x)}}{q(x)}+\frac{(|s|^2+\varepsilon)^{p(x)/2}}{p(x)}-v_n^\varepsilon (x)u(x)\right)\\
&= & \frac{|u|^{q(x)}}{(q(x))^2}\big(\log |u|^{q(x)}-1\big)\partial_iq(x)+|u|^{q(x)-2} u\partial_i u\\
& &+\frac{(|\nabla u|^2+\varepsilon)^{p(x)/2}}{2(p(x))^2}\big(\log(|\nabla u|^2+\varepsilon)^{p(x)}-1\big)\partial_ip(x)\\
& &+(|\nabla u|^2+\varepsilon)^{p(x)/2-1}\partial_i(|\nabla u|^2)-\big[ (\partial_iv_n^\varepsilon)u+v_n^\varepsilon\partial_iu\big]
\end{eqnarray*}

We will make use the Pucci-Serrin formula \cite[Prop. 1, p. 683]{ps} in the form
\begin{multline}
\label{psf}
\int_{\partial \Omega}\Big[\mathcal{F}(x,0,\nabla u)-\nabla u\cdot \mathcal{F}_s(x,0,\nabla u)\Big](h\cdot \nu)dS\phantom{-----------------}\\
 =\int_\Omega \Big[\mathcal{F}(x,u,\nabla u)\,\mbox{div}\, h+h\cdot\nabla \mathcal{F}(x,u,\nabla u)
 -(h\cdot \nabla u)\,\mbox{div}\,\mathcal{F}_s(x,u,\nabla u)\\
-\mathcal{F}_s(x,u,\nabla u)\cdot\nabla (h\cdot\nabla u)
-au\,\mbox{div}\,\mathcal{F}_s(x,u,\nabla u)\\
-\nabla (au)\cdot\mathcal{F}_s(x,u,\nabla u)\Big]dx
\end{multline}
Taking $a$ constant, $h=x=(x_1,\dots,x_n),$ $u=w_n^\varepsilon$  equation (\ref{psf}) becomes
\begin{multline}
\label{4.20-4.21}
\int_{\partial \Omega}\frac{(|\nabla w_n^\varepsilon|^2 +\varepsilon)^{p(x)/2}}{p(x)}(x\cdot\nu)dS-\int_{\partial\Omega}(|\nabla w_n^\varepsilon|^2+\varepsilon)^{p(x)/2-1}|\nabla w_n^\varepsilon|^2(x\cdot\nu )dS = \\
=\int_{\Omega}N\left(\frac{|w_n^\varepsilon|^{q(x)}}{q(x)}+\frac{(|\nabla w_n^\varepsilon|^2+\varepsilon)^{p(x)/2}}{p(x)}-v_n^\varepsilon w_n^\varepsilon\right)dx
+\int_{\Omega}(x\cdot\nabla q(x))\frac{|w_n^\varepsilon|^{q(x)}}{(q(x))^2}\big(\log|w_n^\varepsilon|^{q(x)}-1\big)dx\\
\shoveleft{+\int_{\Omega}(x\cdot\nabla p(x))\frac{(|\nabla w_n^\varepsilon|^2+\varepsilon)^{p(x)/2}}{(p(x))^2}\big(\log(|\nabla w_n^\varepsilon|^2+\varepsilon)^{p(x)/2}-1\big)dx
 -\int_{\Omega}w_n^\varepsilon (x\cdot\nabla v_n^\varepsilon)dx}\\
 -\int_{\Omega}(|\nabla w_n^\varepsilon|^2+\varepsilon)^{(p(x)-2)/2}|\nabla w_n^\varepsilon|^2dx
+\int_{\Omega}a w_n^\varepsilon A_\varepsilon w_n^\varepsilon dx-\int_{\Omega}(\nabla (a w_n^\varepsilon)\cdot\nabla w_n^\varepsilon)(|\nabla w_n^\varepsilon|^2+\varepsilon)^{(p(x)-2)/2}dx.
\end{multline}
For the surface integrals in (\ref{4.20-4.21}) adding and subtracting  the integral  $\varepsilon\int_{\partial\Omega}(|\nabla w_n^\varepsilon|^2+\varepsilon)^{p(x)/2-1}(x\cdot\nu )dS$ we have
\begin{multline}
\label{ds}
\int_{\partial \Omega}\frac{(|\nabla w_n^\varepsilon|^2+\varepsilon)^{p(x)/2}}{p(x)}(x\cdot\nu)dS-\int_{\partial\Omega}(|\nabla w_n^\varepsilon|^2+\varepsilon)^{p(x)/2-1}|\nabla w_n^\varepsilon|^2(x\cdot\nu )dS=\\
=\int_{\partial\Omega}\left(\frac{1}{p(x)}-1 \right)(|\nabla w_n^\varepsilon|^2+\varepsilon)^{p(x)/2}(x\cdot\nu )dS+\varepsilon\int_{\partial\Omega}(|\nabla w_n^\varepsilon|^2+\varepsilon)^{p(x)/2-1}(x\cdot\nu )dS
\end{multline}
On the other hand, since $(x\cdot\nu(x))\geqslant 0$ for all $x\in\partial \Omega,$ then
\begin{eqnarray}
\label{4.22o}
 \varepsilon\int_{\partial\Omega}(|\nabla w_n^\varepsilon|^2+\varepsilon)^{p(x)/2-1}(x\cdot\nu )dS\leqslant
           \begin{cases}
                       \int_{\partial \Omega}\varepsilon^{p(x)/2} (x\cdot\nu(x))dS,\quad\mbox{ if }\quad 1 < p(x)\leqslant 2,\\
                       \int_{\partial\Omega}\frac{p(x)-2}{p(x)}(|\nabla w_n^\varepsilon|^2+\varepsilon)^{p(x)/2}(x\cdot\nu )dS+\\\phantom{-}+\int_{\partial\Omega}\frac{2}{p(x)}\varepsilon^{p(x)/2} (x\cdot\nu(x))dS,\quad\mbox{ if }\quad 2<p(x).
           \end{cases}
\end{eqnarray}
Now we analyze what happen with each term in (\ref{4.20-4.21}) as $\varepsilon\to 0.$ We begin with the last term and we continue the analysis going down to up into the equation:
\begin{enumerate}
\item  $-\int_{\Omega}(\nabla (a w_n^\varepsilon)\cdot\nabla w_n^\varepsilon)(|\nabla w_n^\varepsilon|^2+\varepsilon)^{(p(x)-2)/2}dx\to -a\int_{\Omega}|\nabla w_n|^{p(x)}dx$  by (\ref{18o}).
\item     $\int_{\Omega}a w_n^\varepsilon A_\varepsilon w_n^\varepsilon dx\to a\left(\int_{\Omega}2|u_n|^{q(x)-2}u_n w_ndx-\int_{\Omega}|w_n|^{q(x)}dx\right)$ by (\ref{Ene}) and (\ref{16o'}).
\item $-\int_{\Omega}(|\nabla w_n^\varepsilon|^2+\varepsilon)^{(p(x)-2)/2}|\nabla w_n^\varepsilon|^2dx\to -\int_{\Omega}|\nabla w_n|^{p(x)}dx $ by (\ref{18o}).

\item For the term $-\int_{\Omega}w_n^\varepsilon (x\cdot\nabla v_n^\varepsilon)dx$ we make the following estimations
    \begin{equation}
    \label{C}-\int_{\Omega}w_n^\varepsilon (x\cdot\nabla v_n^\varepsilon)dx=-\int_{\Omega}x\cdot\nabla (w_n^\varepsilon v_n^\varepsilon)dx+\int_{\Omega} v_n^\varepsilon x\cdot\nabla w_n^\varepsilon dx.
    \end{equation}
    Note that $\int_{\Omega} v_n^\varepsilon x\cdot\nabla w_n^\varepsilon dx\to 2\int_{\Omega} |u_n|^{q(x)-2}u_n x\cdot\nabla w_ndx$ as $\varepsilon \to 0,$ by a similar proof as in (\ref{16o'}).

    On the other hand, calculating the first term in the right hand side of (\ref{C}),
    \begin{eqnarray}\nonumber
    \label{C1}-\int_{\Omega}x\cdot\nabla (w_n^\varepsilon v_n^\varepsilon)dx & =& \int_{\Omega} v_n^\varepsilon w_n^\varepsilon\,\text{div}\,x \,dx-\int_{\partial\Omega}v_n^\varepsilon w_n^\varepsilon(x\cdot\nu)dS\\
    & = & N\int_{\Omega} v_n^\varepsilon w_n^\varepsilon dx.
    \end{eqnarray}

\item We claim that
    \begin{multline}
    \label{lognograd}
    \int_{\Omega}(x\cdot\nabla q(x))\frac{|w_n^\varepsilon|^{q(x)}}{(q(x))^2}\big(\log|w_n^\varepsilon|^{q(x)}-1\big)dx\to \phantom{OOOOOOOOOOOOOOOOOO} \\  \int_{\Omega}(x\cdot\nabla q(x))\frac{|w_n|^{q(x)}}{(q(x))^2}\big(\log|w_n|^{q(x)}-1\big)dx
    \end{multline}
    and
    \begin{multline}
    \label{loggrad}
     \int_{\Omega}(x\cdot\nabla p(x))\frac{(|\nabla w_n^\varepsilon|^2+\varepsilon)^{p(x)/2}}{(p(x))^2}\big(\log(|\nabla w_n^\varepsilon|^2+\varepsilon)^{p(x)/2}-1\big)dx\to\phantom{OOOOOOOO}\\
     \int_{\Omega}(x\cdot\nabla p(x))\frac{|\nabla w_n|^{p(x)}}{(p(x))^2}\big(\log|\nabla w_n|^{p(x)}-1\big)dx
    \end{multline}
    for $\eta>0.$

     Fix $$I_1:=\int_{\Omega}(x\cdot\nabla q(x))\frac{|w_n^\varepsilon|^{q(x)}}{(q(x))^2}\log|w_n^\varepsilon|^{q(x)}dx$$
    and $$I_2:=\int_{\Omega}(x\cdot\nabla p(x))\frac{(|\nabla w_n^\varepsilon|^2+\varepsilon)^{p(x)/2}}{(p(x))^2}\log(|\nabla w_n^\varepsilon|^2+\varepsilon)^{p(x)/2}dx.$$

      In order to prove (\ref{lognograd}) and (\ref{loggrad}), we estimate $I_{1}$ by  distinguishing the cases  $|w_n^\varepsilon|\leq
      1,$ and $|w_n^\varepsilon|> 1$. Notice that  the relations
      \begin{equation}\label{e3}
      \begin{gathered}
        \sup_{0\leq t\leq 1}t^{\eta}|\log t|<\infty
      \end{gathered}
      \end{equation}

      \begin{equation}\label{e4}
      \begin{gathered}
        \sup_{t>1}t^{-\eta}\log t<\infty
      \end{gathered}
      \end{equation}
       hold for $\eta >0$.\\
       Set ${\Omega}_{1}:=\{x \in \Omega:  |w_n^\varepsilon(x)|\leq 1  \} $ and  ${\Omega}_{2}:=\{x \in \Omega:  |w_n^\varepsilon (x)|>1  \}.$
       We can choose $k\in\mathbb{N}$ such that $p(x)-1/k\geq p^-.$ Since $w_n^\varepsilon\in L^{p^-}(\Omega)$ and in ${\Omega}_1,$ $|w_n^\varepsilon(x)|\leq 1$
       we have
      \begin{equation}
      \label{Leb1}
       \left|(x\cdot\nabla q(x))\frac{|w_n^\varepsilon|^{q(x)}}{(q(x))^2}\log|w_n^\varepsilon|^{q(x)}\right|\leq C|w_n^\varepsilon(x)|^{p(x)-1/m}\leq C|w_n^\varepsilon(x)|^{p^-},
       \end{equation}
        for $m>k.$

    For ${\Omega}_2$ we can choose $k'$ such that $p(x)+1/k'\leq (p(x))^*=Np(x)/(N-p(x)).$
   So
    \begin{equation}
    \label{Leb2}
     \left|(x\cdot\nabla q(x))\frac{|w_n^\varepsilon|^{q(x)}}{(q(x))^2}\log|w_n^\varepsilon|^{q(x)}\right|\leq C|w_n^\varepsilon(x)|^{p(x)+1/m}\leq C|w_n^\varepsilon(x)|^{(p(x))^*},
     \end{equation}
     for $m>k',$ and $x\in{\Omega}_2.$
     Therefore (\ref{Leb1}), (\ref{Leb2}), and the convergence of $w_n^\varepsilon$ in Lemma \ref{3.1} imply that there exists $h(x)\in L^1(\Omega)$ such that
    \begin{equation}
     \label{acot}
      \left|(x\cdot\nabla q(x))\frac{|w_n^\varepsilon|^{q(x)}}{(q(x))^2}\log|w_n^\varepsilon|^{q(x)}\right|\leq h(x)
      \end{equation}
       On the other hand, given the convergence Lemma \ref{3.1}, assertion (\ref{7o}) and the continuity of the log function, we may conclude that
    \begin{equation}
    \label{ult}
     (x\cdot\nabla q(x))\frac{|w_n^\varepsilon|^{q(x)}}{(q(x))^2}\log|w_n^\varepsilon|^{q(x)}\to(x\cdot\nabla q(x))\frac{|w_n|^{q(x)}}{(q(x))^2}\log|w_n|^{q(x)}
     \end{equation}
      $a.e.$ in $\Omega$ as $\varepsilon \to 0.$ With (\ref{acot}), (\ref{ult}), and the Lebesgue convergence Theorem the claim of point (5) follows.

\item Finally, $\displaystyle{\int_{\Omega}N\left(\frac{|w_n^\varepsilon|^{q(x)}}{q(x)}+\frac{(|\nabla w_n^\varepsilon|^2+\varepsilon)^{p(x)/2}}{p(x)}\right)dx\to \int_{\Omega} N\left(\frac{|w_n|^{q(x)}}{q(x)}+\frac{|\nabla w_n|^{p(x)}}{p(x)}\right)dx}$ as $\varepsilon\to 0$ by (\ref{18o}) and (\ref{7o}).
\end{enumerate}
Considering points (1) to (6), identities (\ref{4.20-4.21}), (\ref{ds}), and inequality (\ref{4.22o}), we obtain
\begin{multline}
\label{inl4.2}
N\int_\Omega\frac{|w_n|^{q(x)}}{q(x)}dx+\int_\Omega\frac{N-p(x)}{p(x)}|\nabla w_n|^{p(x)}dx
+\int_\Omega x\cdot\nabla p(x)\frac{|\nabla w_n|^{p(x)}}{p(x)^2}\left(\log |\nabla w_n|^{p(x)}-1\right)dx\\+\int_\Omega x\cdot\nabla q(x)\frac{|w_n|^{q(x)}}{q(x)^2}\left(\log |w_n|^{q(x)}-1\right)dx
+2\int_\Omega|u_n|^{q(x)-2}u_nx\cdot \nabla w_ndx\\
+a\left(\int_\Omega 2|u_n|^{q(x)-2}u_n w_ndx-\int_\Omega|w_n|^{q(x)}dx-\int_\Omega |\nabla w_n|^{p(x)}dx\right)
+R_n\leq 0,
\end{multline}
where $R_n=\frac{p^\dag-1}{p^+}\limsup_{\varepsilon\to 0}\int_{\partial \Omega}\left(|\nabla w_n^\varepsilon|^2+\varepsilon\right)^{p(x)/2}(x\cdot\nu(x))dS,$ and $p^\dag=\min_{x\in \Omega}\left\{2,p(x)\right\}.$

Now let $n\to\infty$ in (\ref{inl4.2}) and take into account (\ref{8o}), (\ref{9o}) to obtain

\begin{multline}
\label{poho1}
N\int_\Omega\frac{|u|^{q(x)}}{q(x)}dx+\int_\Omega\frac{N-p(x)}{p(x)}|\nabla u|^{p(x)}dx
+\int_\Omega x\cdot\nabla p(x)\frac{|\nabla u|^{p(x)}}{p(x)^2}\left(\log |\nabla u|^{p(x)}-1\right)dx\\+\int_\Omega x\cdot\nabla q(x)\frac{|u|^{q(x)}}{q(x)^2}\left(\log |u|^{q(x)}-1\right)dx
+2\int_\Omega|u|^{q(x)-2}u \left(x\cdot \nabla u \right)dx\\
+a\left(\int_\Omega |u|^{q(x)}dx-\int_\Omega |\nabla u|^{p(x)}dx\right)
+R\leq 0,
\end{multline}
where $\displaystyle R=\frac{p^\dag-1}{p^+}\limsup_{n\to\infty}\limsup_{\varepsilon\to 0}\int_{\partial \Omega}\left(|\nabla w_n^\varepsilon|^2+\varepsilon\right)^{p(x)/2}(x\cdot\nu(x))dS.$

Further, notice that since $u$ is a weak solution of (\ref{E}),
\begin{equation}
\label{a0}
\int_\Omega |u|^{q(x)}dx-\int_\Omega |\nabla u|^{p(x)}dx=0.
\end{equation}
  In fact, multiplying (\ref{E}) by $\varphi\in W^{1,p(\cdot)}_0(\Omega),$ and integrating by parts, we have
$$\int_\Omega |\nabla u|^{p(x)-2}\nabla u dx=\int_\Omega |u|^{q(x)-2}u\varphi dx.$$ Taking $\varphi=u$ we get (\ref{a0}) as wanted.
 On the other hand,
 \begin{eqnarray}
    \label{C2}
    \nonumber\int_{\Omega} \frac{x\cdot \nabla |u|^{q(x)}}{q(x)}dx&=&\int_{\Omega} |u|^{q(x)-2}u(x\cdot  \nabla u)dx\\
    & &+\int_{\Omega}\frac{1}{q(x)^2}|u|^{q(x)}\log |u|^{q(x)}(x\cdot\nabla q(x))dx,
    \end{eqnarray}
    so that
     \begin{eqnarray}
     \label{C3}\nonumber
     \int_{\Omega} \frac{x\cdot \nabla |u|^{q(x)}}{q(x)}dx&=&-\int_{\Omega} \text{div}\left(\frac{x}{q(x)}\right)|u|^{q(x)}dx+\int_{\partial\Omega}|u|^{q(x)}\frac{\partial}{\partial \nu}\left(\frac{x}{q(x)}\right)dS\\
    & & -N\int_{\Omega} \frac{|u|^{q(x)}}{q(x)}dx+\int_{\Omega}\frac{|u|^{q(x)}x\cdot\nabla q(x)}{q(x)^2}dx.
    \end{eqnarray}
    Hence from (\ref{C2}), (\ref{C3})

    \begin{eqnarray}
    \label{C4} \nonumber
    \int_{\Omega} |u|^{q(x)-2}u(x\cdot \nabla u)dx& = &-N\int_{\Omega} \frac{|u|^{q(x)}}{q(x)}dx\\
    & & +\int_{\Omega} \frac{|u|^{q(x)}x\cdot\nabla q(x)}{q(x)^2}\left(1-\log |u|^{q(x)} \right)dx
    \end{eqnarray}
    We derive inequality (\ref{poho2}) by substituting  (\ref{a0}) and (\ref{C4}) in (\ref{poho1}) .
    \end{proof}


\section{Nonexistence of Nontrivial Solutions}
\label{NES}

Now we can state a Non Existence Theorem which is a generalization to variable exponent Sobolev spaces of Theorem III, p. 142 in \cite{o1}. The proofs are similar to those in \cite{o1}, but are included here for the reader's convenience.

\begin{theorem}\label{pti}
Consider the Problem (\ref{E}), where $\Omega\subset \mathbb{R}^N$ is a bounded domain  of Class $C^1,$  $p(\cdot)$ is a log-H\"older exponent with $1<p^-\leqslant p(x)\leqslant p^+<N.$ Let $\mathcal{P}$ be as defined in (\ref{P}). Then we have:
\begin{enumerate}
\item[i)] If $\Omega$ is star-shaped and $q^->(p^+)^*$ then Problem (\ref{E}) has not  a nontrivial weak solution belonging  to $\mathcal{P}\cap\mathcal{E}$ where
    $$\mathcal{E}=\left\{u:\int_\Omega \log \left(\frac{\left(|\nabla u|^{p(x)}e^{-1}\right)^{\frac{x\cdot\nabla p}{p^2}|\nabla u|^{p(x)}}}{\left(|u|^{q(x)}e^{-1}\right)^{\frac{x\cdot\nabla q}{q^2}|u|^{q(x)} }} \right)dx\geqslant 0  \right\}.$$

\item[ii)] If $\Omega$ is strictly star-shaped and $q^-=(p^+)^*$ then Problem (\ref{E}) has not  a nontrivial weak solution of definite sign belonging  to $\mathcal{P}\cap\mathcal{E}.$
\end{enumerate}
\end{theorem}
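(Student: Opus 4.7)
The plan is to feed the Pohozaev-type inequality (\ref{poho2}) from Theorem \ref{l4.2} directly into the hypotheses and extract a sign contradiction. First I would observe that the integrand defining $\mathcal{E}$ is, after expanding the logarithm, exactly the sum of the two log-terms that appear in (\ref{poho2}); hence the assumption $u\in\mathcal{E}$ forces
\[
\int_\Omega x\!\cdot\!\nabla p(x)\,\frac{|\nabla u|^{p(x)}}{p(x)^{2}}\log\!\bigl(e^{-1}|\nabla u|^{p(x)}\bigr)\,dx
-\int_\Omega x\!\cdot\!\nabla q(x)\,\frac{|u|^{q(x)}}{q(x)^{2}}\log\!\bigl(e^{-1}|u|^{q(x)}\bigr)\,dx\;\geq\;0.
\]
Next I would test the weak formulation of $(E)$ against $u$ itself, which is admissible because $u\in W_0^{1,p(\cdot)}\cap L^{q(\cdot)}(\Omega)$, to obtain the energy identity $\int_\Omega|\nabla u|^{p(x)}\,dx=\int_\Omega|u|^{q(x)}\,dx=:J$.

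For part (i), with $\Omega$ star-shaped we have $x\cdot\nu(x)\geq 0$ on $\partial\Omega$ and $p^{\dagger}-1>0$, so $R\geq 0$. I would then bound the two remaining integrals pointwise: since $t\mapsto (N-t)/t$ and $t\mapsto N/t$ are both decreasing,
\[
\int_\Omega\frac{N-p(x)}{p(x)}|\nabla u|^{p(x)}\,dx \geq \frac{N-p^{+}}{p^{+}}\,J,
\qquad
-\int_\Omega\frac{N}{q(x)}|u|^{q(x)}\,dx \geq -\frac{N}{q^{-}}\,J.
\]
Inserting these into (\ref{poho2}) together with the non-negativity of the log-terms and of $R$ yields
\[
\Bigl(\frac{N-p^{+}}{p^{+}}-\frac{N}{q^{-}}\Bigr)J \;=\;\frac{q^{-}(N-p^{+})-Np^{+}}{p^{+}q^{-}}\,J\;\leq\;0,
\]
and the coefficient is strictly positive precisely because $q^{-}>(p^{+})^{*}=Np^{+}/(N-p^{+})$. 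Hence $J=0$, which forces $u\equiv 0$.

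For part (ii) the coefficient above collapses to zero, so the argument must now squeeze strict positivity out of the remainder $R$. I would use strict star-shapedness $x\cdot\nu(x)\geq\rho>0$ to write
\[
R\;\geq\;\rho\,\frac{p^{\dagger}-1}{p^{+}}\,\limsup_{n\to\infty}\limsup_{\varepsilon\to 0}\int_{\partial\Omega}\bigl(|\nabla w_{n}^{\varepsilon}|^{2}+\varepsilon\bigr)^{p(x)/2}\,dS,
\]
and then exploit the definite-sign assumption on $u$: this sign is transferred to the minimizers $w_{n}$ of the strictly convex functional $F$ in Lemma \ref{3.1}(ii) (since the datum $2|u_{n}|^{q(x)-2}u_{n}$ has the same sign), and via the approximations $w_{n}^{\varepsilon}\in C^{2}(\overline{\Omega})$ of Lemma \ref{3.1}(i) a Hopf-type boundary-point argument forces $|\nabla w_{n}^{\varepsilon}|$ to stay uniformly away from $0$ on $\partial\Omega$ as $\varepsilon\to 0$ and $n\to\infty$. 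This yields $R>0$, again contradicting (\ref{poho2}).

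The main obstacle is this last step: controlling the boundary gradient of the regularized sequence $w_{n}^{\varepsilon}$ uniformly in both parameters. Because $u$ itself need not be $C^{2}$ (this is precisely the motivation for working with approximations in \S\ref{ptis}), one cannot simply apply Hopf's lemma to $u$; one must instead show that the sign propagates through the double limit $\varepsilon\to 0$, $n\to\infty$ in a way compatible with strict positivity of $\int_{\partial\Omega}|\nabla w_{n}^{\varepsilon}|^{p(x)}\,dS$. Everything else reduces to the monotonicity of $(N-t)/t$ and $N/t$, the energy identity, and the bookkeeping already present in Theorem \ref{l4.2}.
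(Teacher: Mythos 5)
Your part (i) is essentially the paper's argument: expand the $\mathcal{E}$-condition to absorb the two logarithmic terms, use $R\geq 0$ from star-shapedness, the energy identity $\int_\Omega|\nabla u|^{p(x)}dx=\int_\Omega|u|^{q(x)}dx$, and the monotone bounds $\frac{N-p(x)}{p(x)}\geq\frac{N-p^+}{p^+}$, $\frac{N}{q(x)}\leq\frac{N}{q^-}$ to reach $\bigl(\frac{N-p^+}{p^+}-\frac{N}{q^-}\bigr)\int_\Omega|u|^{q(x)}dx\leq 0$ with a strictly positive coefficient. That part is correct and matches the paper.

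Part (ii) is where you diverge, and your route has a genuine gap that you yourself flag: the claim that a Hopf-type argument gives $|\nabla w_n^\varepsilon|\geq c>0$ on $\partial\Omega$ \emph{uniformly} in both $\varepsilon$ and $n$ is not proved, and it is exactly the hard point. It would require sign propagation to $w_n^\varepsilon$, a boundary-point lemma for the regularized, non-homogeneous operator $A_\varepsilon$ with the zeroth-order term $|w|^{q(x)-2}w$, and stability of the Hopf constant through the double limit — none of which is available off the shelf, especially since the whole point of the construction is that $u$ itself lacks $C^2$ regularity. The paper avoids this entirely by running the logic in the opposite direction: when $q^-=(p^+)^*$ the main coefficient vanishes, so (\ref{poho2}) forces $R\leq 0$, while star-shapedness gives $R\geq 0$, hence $R=0$; strict star-shapedness ($x\cdot\nu\geq\rho>0$) then forces $\limsup_n\limsup_\varepsilon\int_{\partial\Omega}(|\nabla w_n^\varepsilon|^2+\varepsilon)^{p(x)/2}dS=0$. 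Testing the approximate equation $(E)_n^\varepsilon$ against $v\equiv 1$ and integrating by parts bounds $\bigl|\int_\Omega|u|^{q(x)-2}u\,dx\bigr|$ by a constant times this vanishing boundary integral, so $\int_\Omega|u|^{q(x)-2}u\,dx=0$, which is incompatible with a nontrivial solution of definite sign. You should replace your Hopf step with this "test against the constant function" argument; as written, your proof of (ii) is incomplete.
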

\begin{proof}
i) If $\Omega$ is star-shaped, $R\geqslant 0$ in (\ref{poho2}). Then it follows that
$$\left(\frac{N- p^+}{p^+}-\frac{N}{q^-}\right)\int_\Omega |u|^{q(x)}dx\leqslant 0.$$
So $u\equiv 0.$

ii) If $\Omega$ is strictly star-shaped, $R=0$ in (\ref{poho2}), so
$$0=R\geqslant \rho\limsup_{n\to\infty}\limsup_{\varepsilon\to 0}\int_{\partial\Omega}\left(|\nabla w_n^\varepsilon|^2+\varepsilon\right)^{p(x) /2}dS .$$
Since $\rho>0$ we have
$$0=\limsup_{n\to\infty}\limsup_{\varepsilon\to 0}\int_{\partial\Omega}\left(|\nabla w_n^\varepsilon|^2+\varepsilon\right)^{p(x) /2}dS .$$
 Multiplying the PDE in (\ref{Ene}) by $v(x)\equiv 1,$ integrating by parts, and taking $\limsup$ as $\varepsilon\to 0$ and $n\to \infty$ we obtain
$$\left|\int_\Omega |u|^{q(x)-2}udx\right|\leqslant C\limsup_{n\to\infty}\limsup_{\varepsilon\to 0}\int_{\partial\Omega}\left(|\nabla w_n^\varepsilon|^2+\varepsilon\right)^{p(x) /2}dS=0, \quad C\geqslant 0.$$
Therefore $\int_\Omega |u|^{q(x)-2}udx=0.$ 
\end{proof}

\end{document}